\definecolor{shadecolor}{gray}{0.875}
\newtheorem{thrm}{Theorem}[section]
\newtheorem{lem}[thrm]{Lemma}
\newtheorem{cor}[thrm]{Corollary}
\newtheorem{prop}[thrm]{Proposition}
\theoremstyle{definition}
\newtheorem{defn}[thrm]{Definition}
\newtheorem{exmple}[thrm]{Example}
\newtheorem{rmk}[thrm]{Remark}
\DeclareMathOperator{\Eff}{\overline{Eff}}
\begin{document}

\title{Kernels of numerical pushforwards}

\author{Mihai Fulger}
\address{Department of Mathematics, Princeton University\\
Princeton, NJ \, \, 08544}
\address{Institute of Mathematics of the Romanian Academy, P. O. Box 1-764, RO-014700,
Bucharest, Romania}
\email{afulger@princeton.edu}

\author{Brian Lehmann}
\thanks{The second author is supported by NSF Award 1004363.}
\address{Department of Mathematics, Boston College  \\
Chestnut Hill, MA \, \, 02467}
\email{lehmannb@bc.edu}

\begin{abstract}
Let $\pi: X \to Y$ be a morphism of projective varieties and consider the pushforward map $\pi_{*}: N_{k}(X) \to N_{k}(Y)$ of numerical cycle classes.  We show that when the Chow groups of points of the fibers are as simple as they can be, then the kernel of $\pi_{*}$ is spanned by $k$-cycles contracted by $\pi$.
\end{abstract}


\maketitle

\section{Introduction}

Let $\pi: X \to Y$ be a morphism of projective varieties over an algebraically closed field.  The pushforward of cycles induces a map $\pi_{*}: N_{k}(X) \to N_{k}(Y)$ on numerical groups with $\mathbb{R}$-coeffficients. One would like to understand how $\ker(\pi_{*})$ reflects the geometry of the map $\pi$.  In the special case when $\alpha \in N_{k}(X)$ is the class of a closed subvariety $Z$, then $\alpha$ lies in the kernel of $\pi_{*}$ precisely when $\dim \pi(Z) < \dim(Z)$. A similar statement holds when $\alpha$ is the class of an effective cycle. However, the geometry of arbitrary elements of $\ker(\pi_{*})$ is more subtle, e.g. Example \ref{ex:abeliansurface}. We are interested in identifying classes of morphisms $\pi$ for which the kernel of $\pi_*$ reflects the ``geometry'' of the morphism in the simplest way: it is spanned by effective cycles contracted by $\pi$.


Perhaps the easiest example where this property fails is the following.  

\begin{exmple} \label{ex:abeliansurface}
Let $E$ be an elliptic curve without complex multiplication and define $X = E \times E$.  Then the Neron-Severi space $N_{1}(X)$ is $3$-dimensional with a basis given by the classes $F_{1}$, $F_{2}$ of fibers of the two projections and by the diagonal class $\Delta$.  The pseudo-effective cone $\Eff_{1}(X)$ is a round cone.

Consider the first projection $\pi: X \to E$.  The curves contracted by $\pi$ are all numerically equivalent with class $F_{1}$.  However, the kernel of $\pi$ is two-dimensional: it is generated by $F_{1}$ and by $\Delta - F_{2}$.  In particular, the kernel is \emph{not} spanned by the subvarieties contracted by $\pi$.
\end{exmple}


We expect that the failure 
in the previous example is caused by the fibers having large Chow groups.  Our main result supports this.

\begin{thrm}\label{firstgkthrm}
Let $\pi: X \to Y$ be a dominant morphism of projective varieties over an uncountable algebraically closed field, with $Y$ smooth.  Suppose that \emph{every} fiber $F$ (over a closed point) of $\pi$ satisfies $\dim_{\mathbb Q}(A_{0}(F)_{\mathbb Q}) =1$.  Then the kernel of $\pi$ is spanned by effective $\pi$-contracted cycles.
\end{thrm}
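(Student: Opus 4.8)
The plan is to establish the reverse inclusion $\ker(\pi_{*})\subseteq\mathcal C_{k}$, where $\mathcal C_{k}\subseteq N_{k}(X)$ denotes the subspace spanned by the classes of effective $\pi$-contracted $k$-cycles; the inclusion $\mathcal C_{k}\subseteq\ker(\pi_{*})$ is immediate, since $\dim\pi(Z)<\dim Z$ forces $\pi_{*}[Z]=0$. So fix $\alpha\in\ker(\pi_{*})$ with a cycle representative $\tilde\alpha$; the goal is $\alpha\in\mathcal C_{k}$. (The hypothesis already forces every fiber to be connected, since $A_{0}$ of a scheme with at least two connected components has $\mathbb Q$-dimension $\geq 2$.)

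The first step is to pass to the generic fiber. Let $K=k(Y)$, and let $X_{\eta}$, $X_{\bar\eta}$ be the generic and geometric generic fibers. A spreading-out argument shows $\dim_{\mathbb Q}A_{0}(X_{\bar\eta})_{\mathbb Q}=1$: otherwise some degree-zero zero-cycle on $X_{\bar\eta}$ is not rationally trivial; it and any rational equivalence witnessing triviality are defined over a finitely generated extension of $k$, hence spread to a family of zero-cycles over a variety dominating $Y$, and the set of closed points of $Y$ over which the corresponding member is rationally trivial is a countable union of proper closed subsets ---this is exactly where the uncountability of $k$ is used--- so some closed fiber would have $\dim_{\mathbb Q}A_{0}>1$, contradicting the hypothesis. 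A transfer (norm) argument then upgrades this to $\dim_{\mathbb Q}A_{0}(X_{\eta})_{\mathbb Q}=1$ over $K$ itself.

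Now apply the Bloch--Srinivas decomposition of the diagonal of $X_{\eta}$: over $\mathbb Q$ one has $[\Delta_{X_{\eta}}]=[X_{\eta}\times_{K}w]+[\Gamma]$ in $A_{d}(X_{\eta}\times_{K}X_{\eta})_{\mathbb Q}$, where $d=\dim X-\dim Y$, $w$ is a zero-cycle of degree $1$, and $\Gamma$ is supported on $D_{\eta}\times_{K}X_{\eta}$ for a proper closed subset $D_{\eta}\subsetneq X_{\eta}$. I take $w=\tfrac1N[\tau\cap X_{\eta}]$, where $\tau=H_{1}\cap\cdots\cap H_{d}$ is a general complete intersection of very ample divisors on $X$ and $N>0$ is its degree over $Y$, then spread the identity out over a dense open $Y^{\circ}\subseteq Y$, with $D_{\eta}$ spreading to a closed $\mathcal D\subseteq X$ of relative dimension $<d$ over $Y$, and use the localization sequence to account for the boundary. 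This yields, in $A_{\dim X}(X\times_{Y}X)_{\mathbb Q}$,
$$[\Delta_{X/Y}]=\tfrac1N\,p_{2}^{*}\tau+\Gamma_{\mathcal D}+\Gamma_{Z},$$
with $\Gamma_{\mathcal D}$ supported on $\mathcal D\times_{Y}X$ and $\Gamma_{Z}$ supported over a proper closed subset $Z\subsetneq Y$. Cycles on $X\times_{Y}X$ act as correspondences on $N_{*}(X)$ ---here one uses that $Y$ is smooth, via the refined Gysin map for the diagonal $Y\hookrightarrow Y\times Y$--- compatibly with numerical equivalence. Applying this to $\tilde\alpha$ and using the flat base-change identity $p_{2*}p_{1}^{*}=\pi^{*}\pi_{*}$ (valid over $Y^{\circ}$, where $\pi$ is flat) to rewrite the first term as $\tfrac1N\,\tau\cdot\pi^{*}\pi_{*}\tilde\alpha$ ---which is numerically trivial because $\pi_{*}\tilde\alpha$ is, as $\pi_{*}\alpha=0$--- we obtain $\alpha\equiv(\Gamma_{\mathcal D})_{*}\tilde\alpha+(\Gamma_{Z})_{*}\tilde\alpha$ in $N_{k}(X)$.

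Finally, the two error classes must be placed in $\mathcal C_{k}$, which I would do by Noetherian induction. Since $\Gamma_{Z}$ is supported over $Z$, the class $(\Gamma_{Z})_{*}\tilde\alpha$ lies in the image of $N_{k}(\pi^{-1}(Z))$; the fibers of $\pi^{-1}(Z)\to Z$ are among the fibers of $\pi$, so after base-changing by a resolution (or alteration) of $Z$ the inductive hypothesis on $\dim Y$ applies, and together with $\pi_{*}\alpha=0$ this places the term in $\mathcal C_{k}$. The class $(\Gamma_{\mathcal D})_{*}\tilde\alpha$ is controlled by the subvariety $\mathcal D$, of smaller relative dimension over $Y$: when $\pi(\Supp\tilde\alpha\cap\mathcal D)\subsetneq Y$ this again reduces $\dim Y$, but in general $\mathcal D$ dominates $Y$ and one wants to rerun the whole argument for $\mathcal D\to Y$. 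This is the step I expect to be the main obstacle, because $\mathcal D\to Y$ need not have fibers with trivial $A_{0}$, so the induction on relative dimension does not close by a naive reduction; I anticipate it being handled either by strengthening the inductive statement to a class of fibrations stable under passage to $Z$ and to $\mathcal D$, or by iterating the fiberwise decomposition until the error locus either ceases to dominate $Y$ or consists of genuinely $\pi$-contracted cycles. The remaining points requiring care are the constructibility input above (which is what forces the uncountable base field) and the precise construction of the correspondence action on numerical cycle groups of the possibly singular $X$.
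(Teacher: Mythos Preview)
Your relative diagonal decomposition plus Noetherian induction is genuinely different from the paper's argument, and the gap you identify at the $\Gamma_{\mathcal D}$ step is real: the fibers of $\mathcal D\to Y$ need not inherit the $A_{0}$ hypothesis, and neither of the fixes you propose obviously closes the induction. (There is a second issue you did not flag: from $\pi_{*}\alpha=0$ you only get $\pi_{*}\bigl((\Gamma_{\mathcal D})_{*}\tilde\alpha+(\Gamma_{Z})_{*}\tilde\alpha\bigr)=0$, not that each summand separately has vanishing pushforward, so the inductive step over $Z$ is also not justified as written.)

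The paper avoids all of this by a more elementary reduction. First replace the representing cycle by a numerically equivalent $\mathbb Q$-cycle $Z'$ with $\pi_{*}Z'=0$ \emph{as a cycle}, not merely numerically: this is where smoothness of $Y$ enters, via the existence of $\pi^{*}$ on numerical groups and a Snake Lemma argument showing that $\pi_{*}$ carries numerically trivial Chow classes on $X$ onto those on $Y$. Then group the components of $Z'$ by their image in $Y$, so that each packet $Z_{i}$ has $\pi_{*}Z_{i}=0$ as a cycle and is either already $\pi$-contracted or has image a single irreducible $k$-dimensional subvariety $T_{i}\subset Y$. Base-change $\pi$ over $T_{i}$ to obtain $p_{i}:W_{i}\to T_{i}$ with $\dim T_{i}=k$; the fibers of $p_{i}$ are among the fibers of $\pi$, so the $A_{0}$ hypothesis is inherited. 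Now apply Bloch--Srinivas to the cycle $Z_{i}$ itself (not to any diagonal): since the general fiber has $\dim_{\mathbb Q}A_{0}=1$, the class of $Z_{i}$ in a general fiber vanishes away from any one dominant component $Z_{1,i}$, so a multiple of $Z_{i}$ is rationally equivalent to a multiple of $Z_{1,i}$ plus a $p_{i}$-contracted cycle, and the multiple of $Z_{1,i}$ is forced to be zero because $p_{i*}Z_{i}=0$. This is a one-shot argument with no induction and no residual $\mathcal D$-term. Your instinct that Bloch--Srinivas and uncountability are the essential inputs is correct; they are simply deployed over the $k$-dimensional base $T_{i}$ rather than globally over $Y$.
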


The corresponding statement for rational equivalence is well-established (see \cite{bs83}). 
The main point is to translate between rational and numerical equivalence for the map $\pi$.  The hypothesis is satisfied by any rationally chain connected variety.  More generally, the varieties satisfying the hypothesis are regulated by Bloch's Conjecture; for example, for a smooth surface $S$ over $\mathbb{C}$ the condition $\dim_{\mathbb Q}(A_{0}(S)_{\mathbb Q}) =1$ is expected to be equivalent to $\dim H^{0}(S,K_{S}) = 0$.  

A few particular cases are worth special note:

\begin{prop}[cf. \ref{swconjforbirationalsmooth}]
Let $\pi: X \to Y$ be a birational morphism over $\mathbb{C}$ with $Y$ smooth.  Then the kernel of $\pi$ is spanned by effective $\pi$-contracted cycles.
\end{prop}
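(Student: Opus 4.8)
The plan is to deduce the proposition directly from Theorem~\ref{firstgkthrm}. The base field $\mathbb{C}$ is uncountable and algebraically closed, a birational morphism is dominant, and $Y$ is smooth by hypothesis, so the only thing to verify is the fiber condition in that theorem: every fiber $F=\pi^{-1}(y)$ over a closed point $y\in Y$ must satisfy $\dim_{\mathbb{Q}}A_{0}(F)_{\mathbb{Q}}=1$. Thus the entire content reduces to the claim that if $\pi\colon X\to Y$ is a birational morphism of projective $\mathbb{C}$-varieties with $Y$ smooth, then $A_{0}(F)_{\mathbb{Q}}=\mathbb{Q}$ for every closed-point fiber $F$.

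To prove the claim I would first remove the singularities of the source. Choose a resolution $\mu\colon X'\to X$ (Hironaka, as we are in characteristic $0$) and set $\nu=\pi\circ\mu\colon X'\to Y$, a projective birational morphism with $X'$ and $Y$ both smooth. For a closed point $y$, write $F'=\nu^{-1}(y)=\mu^{-1}(F)$. Since $\mu$ is surjective, so is $\mu|_{F'}\colon F'\to F$, and every closed point of $F$ is the image of a closed point of $F'$; hence $\mu_{*}\colon A_{0}(F')_{\mathbb{Q}}\to A_{0}(F)_{\mathbb{Q}}$ is surjective. So it suffices to prove $A_{0}(F')_{\mathbb{Q}}=\mathbb{Q}$, because then $A_{0}(F)_{\mathbb{Q}}$ is a nonzero quotient of $\mathbb{Q}$.

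For the smooth source this is a known structural fact: a proper birational morphism between smooth complex varieties has rationally chain connected fibers --- more generally, so does any resolution of a variety with rational singularities, which covers our case since $Y$ is smooth (see the work of Hogadi and Xu on degenerations of rationally connected varieties, and the references therein). A nonempty rationally chain connected projective variety has all of its closed points mutually rationally equivalent as $0$-cycles, and $A_{0}$ is generated by closed points, so $A_{0}(F')_{\mathbb{Q}}=\mathbb{Q}$. This establishes the claim, and Theorem~\ref{firstgkthrm} then applies verbatim to conclude that $\ker(\pi_{*})$ is spanned by effective $\pi$-contracted cycles.

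The main obstacle is precisely this last geometric input: controlling the zero-cycles on the possibly positive-dimensional, possibly singular fibers of $\pi$; everything else is bookkeeping. It is worth emphasizing that smoothness of $Y$ (or at least rationality of its singularities) is essential here --- over a singular base a birational morphism can acquire fibers with infinite-dimensional $A_{0}$, for instance the exceptional elliptic curve of the blow-up of the vertex of the cone over an elliptic curve, which is exactly the behavior that violates the hypothesis of Theorem~\ref{firstgkthrm} (compare Example~\ref{ex:abeliansurface}).
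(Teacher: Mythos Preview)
Your argument is correct and takes a genuinely different route from the paper's. The paper proves this proposition \emph{before} and independently of Theorem~\ref{firstgkthrm}: it observes that $\pi$ is dominated by a composition of blow-ups of $Y$ along smooth centers (resolve the indeterminacy of $\pi^{-1}$), verifies the GK property for a single smooth blow-up directly via the projective-bundle decomposition of Chow groups (Lemma~\ref{GKblow-up} and Proposition~\ref{weakprojbundle}), and then concludes by the formal Lemmas~\ref{GKcompose} and~\ref{GKdescend} on composing and descending the GK property. You instead feed the morphism into Theorem~\ref{firstgkthrm} and reduce everything to the geometric input that fibers of a birational morphism from a smooth variety to a smooth (hence klt) variety are rationally chain connected, which is the Hacon--McKernan theorem; your parenthetical extension to arbitrary rational singularities is broader than what that reference literally gives, but this is harmless here since $Y$ is smooth. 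The paper's approach is more elementary and self-contained, needing only Hironaka and standard intersection theory, whereas yours is shorter but imports a substantially deeper result from the MMP. One structural remark: in the paper Proposition~\ref{swconjforbirationalsmooth} is established first and then used as an ingredient in Corollary~\ref{cor:rccGK}, so your route reverses the expository flow---though there is no actual circularity, since the proof of Theorem~\ref{firstgkthrm} does not rely on this proposition.
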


\begin{cor}[cf. \ref{cor:rccGK}]
Let $\pi:X\to Y$ be a morphism of projective varieties over $\mathbb C$ with $Y$ smooth and with rationally chain connected general fiber.  Then the kernel of $\pi$ is spanned by effective $\pi$-contracted cycles.
\end{cor}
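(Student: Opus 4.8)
The plan is to deduce this from Theorem~\ref{firstgkthrm} together with the birational case, Proposition~\ref{swconjforbirationalsmooth}. Theorem~\ref{firstgkthrm} requires \emph{every} fibre to have $A_{0}$ of $\mathbb Q$-dimension $1$, whereas the hypothesis here only controls the general fibre; the idea is to replace $\pi$ by a flat model, so that every fibre becomes a flat degeneration of the general one. Over $\mathbb C$ a flat proper degeneration of a rationally chain connected variety is again rationally chain connected (the standard specialization statement for rational chain connectedness), so for a flat model every fibre is rationally chain connected, hence has $A_{0}$ of $\mathbb Q$-dimension $1$.

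Concretely, I would first apply flattening (Raynaud--Gruson) to produce a modification $\mu\colon Y'\to Y$ so that the strict transform $X'\to Y'$ of $\pi$ is flat, and then arrange --- by resolving and using that flatness is preserved under base change, or directly by weak semistable reduction --- that $Y'$ is in addition smooth and projective; here $X'$ is a projective variety and we obtain a modification $g\colon X'\to X$ with $\pi\circ g=\mu\circ\pi'$, where $\pi'\colon X'\to Y'$ is flat. The general fibre of $\pi'$ is birational to that of $\pi$, hence rationally chain connected, so by the specialization remark above every fibre $F$ of $\pi'$ satisfies $\dim_{\mathbb Q}A_{0}(F)_{\mathbb Q}=1$. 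Theorem~\ref{firstgkthrm} then applies to $\pi'$: the kernel of $\pi'_{*}$ is spanned by effective $\pi'$-contracted cycles.

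It remains to transfer this back to $\pi$. Let $\alpha\in\ker(\pi_{*})\subseteq N_{k}(X)$. Since $g$ is a modification, $g_{*}\colon N_{k}(X')\to N_{k}(X)$ is surjective, so pick $\alpha'$ with $g_{*}\alpha'=\alpha$; then $\mu_{*}(\pi'_{*}\alpha')=\pi_{*}\alpha=0$. As $Y$ is smooth, Proposition~\ref{swconjforbirationalsmooth} gives $\pi'_{*}\alpha'=\sum_{i}a_{i}[W_{i}]$ with $W_{i}\subseteq Y'$ closed subvarieties of dimension $k$ satisfying $\dim\mu(W_{i})<k$. For each $i$ choose a $k$-dimensional subvariety $V_{i}\subseteq(\pi')^{-1}(W_{i})$ mapping generically finitely onto $W_{i}$, say $\pi'_{*}[V_{i}]=d_{i}[W_{i}]$ with $d_{i}>0$; then $\alpha'':=\alpha'-\sum_{i}(a_{i}/d_{i})[V_{i}]$ lies in $\ker(\pi'_{*})$, hence by the previous step is an $\mathbb R$-combination $\sum_{j}c_{j}[V'_{j}]$ of classes of effective $\pi'$-contracted subvarieties. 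Finally $\alpha=g_{*}\alpha''+\sum_{i}(a_{i}/d_{i})g_{*}[V_{i}]$, and each of $g_{*}[V'_{j}]$ and $g_{*}[V_{i}]$ is either $0$ or a positive multiple of the class of an effective $\pi$-contracted subvariety: $g$ cannot increase dimension, while $\pi(g(V'_{j}))=\mu(\pi'(V'_{j}))$ and $\pi(g(V_{i}))\subseteq\mu(W_{i})$ have dimension $<k$. Thus $\alpha$ is spanned by effective $\pi$-contracted cycles.

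I expect the main obstacle to be the construction in the second paragraph: producing a model of $\pi$ that is simultaneously flat and has a smooth projective base (so that Theorem~\ref{firstgkthrm} applies to it), rather than merely flat over a possibly singular blow-up, and ensuring the total space stays a variety; this is where flattening must be combined carefully with resolution of singularities, or replaced by weak semistable reduction. The one ingredient external to the paper is that rational chain connectedness specializes in flat proper families in characteristic zero; everything else is the diagram chase in the third paragraph, built on the two results already in hand.
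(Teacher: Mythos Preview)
Your approach matches the paper's: flatten the morphism, invoke specialization of rational chain connectedness so that every fibre has $\dim_{\mathbb Q}A_0=1$, apply Theorem~\ref{firstgkthrm} to the flat model, and transfer back to $\pi$. The paper flattens via the Hilbert scheme (resolve the rational map $Y\dashrightarrow\mathrm{Hilb}(X)$ induced by the fibres and take the main component of the base change), which directly produces a flat family over a smooth base and thereby answers the obstacle you flag in your last paragraph; and your third-paragraph diagram chase is exactly Lemmas~\ref{GKcompose} and~\ref{GKdescend} applied to the square $\mu\circ\pi'=\pi\circ g$, so once $\pi'$ and $\mu$ have the GK property, $\pi$ does too.
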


A related result for curve classes appears in \cite[Corollary 12.1.5.1]{KM92}.

In a different direction, \cite{djv13} investigates (for an arbitrary morphism $\pi$) which $\pi_*$-contracted classes are actually linear combinations of classes of $\pi$-contracted subvarieties.
The main conjecture of \cite{djv13} is that any $\pi_*$-contracted \emph{pseudo}-effective class satisfies this linear combination property.  
In a forthcoming paper we use constructions from \cite{fl14} to 
prove new cases of their conjecture.



\section{Background and conventions} \label{numeqsection}

\noindent By \textit{variety} we mean a reduced, irreducible, separated scheme of finite type over an algebraically closed field of arbitrary characteristic. Unless otherwise stated, $\pi:X\to Y$ denotes a morphism of projective varieties. We say that a $k$-cycle $Z$ is a $\mathbb{Z}$, $\mathbb{Q}$, or $\mathbb{R}$-$k$-cycle depending on whether the coefficients lie in $\mathbb{Z}$, $\mathbb{Q}$, or $\mathbb{R}$. A cycle that is a positive combination of subvarieties is called \emph{effective}. The group of all $k$-$\mathbb{Z}$-cycles is denoted by $Z_{k}(X)$ and the Chow group of $k$-$\mathbb{Z}$-cycles up to rational equivalence (\cite[\S1]{fulton84}) is denoted by $A_{k}(X)$. A closed subscheme $Y\subset X$ determines a \emph{fundamental $\mathbb Z$-cycle} $[Y]$ (cf. \cite[\S1.5]{fulton84}).

Let $\pi: X \to Y$ be a morphism of projective varieties and let $Z$ be a $k$-cycle on $X$.  We say that $Z$ is $\pi$-\emph{contracted} if for every component $Z_{i}$ of $Z$ we have $\dim(\pi(Z_{i}))<\dim(Z_{i})$.

We let $N_{k}(X)_{\mathbb{Z}}$ denote the quotient of $Z_{k}(X)$ by the relation of \emph{numerical equivalence} as in \cite[Chapter 19]{fulton84}.  $N_{k}(X)_{\mathbb{Z}}$ is a lattice inside the \emph{numerical space} $N_k(X):=N_k(X)_{\mathbb Z}\otimes_{\mathbb Z}\mathbb R.$
If $Z$ is a $k$-cycle with $\mathbb{R}$-coefficients, its class in $N_k(X)$ is denoted $[Z]$. The real N\' eron-Severi space of numerical classes of Cartier divisors is denoted $N^1(X)$. Its elements act as first Chern classes on the numerical groups $N_k(X)$. 

For the rest of the paper, the term cycle will always refer to a cycle with $\mathbb{Z}$-coefficients, and a numerical class will always refer to a class with $\mathbb{R}$-coefficients, unless otherwise qualified.

\section{The GK property} \label{gksection}

\begin{defn}A morphism of projective varieties $\pi:X\to Y$ satisfies the \emph{Geometric Kernel} (or GK) property if every class $\alpha \in N_{k}(X)$ with $\pi_*\alpha=0$ lies in the vector space generated by the $k$-dimensional subvarieties which are contracted by $\pi$.
\end{defn}


We begin with a few easy observations and important particular cases. Among them we will see that the GK property holds over $\mathbb C$ for birational maps $\pi$ with a smooth base.

\subsection{Compositions of morphisms}

Note that if $\pi: X \to Y$ is a dominant morphism of projective varieties then $\pi_{*}: N_{k}(X) \to N_{k}(Y)$ is surjective, since any $k$-dimensional subvariety of $Y$ is mapped onto by a $k$-dimensional subvariety of $X$.

\begin{lem}\label{GKcompose} The ($k$-th) GK property is stable under composition
of dominant projective morphisms.
\end{lem}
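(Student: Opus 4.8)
The plan is to take dominant projective morphisms $\pi\colon X\to Y$ and $\rho\colon Y\to Z$, each with the $k$-th GK property, and show that $\rho\circ\pi$ has it. Start with a class $\alpha\in N_k(X)$ such that $(\rho\pi)_*\alpha=\rho_*(\pi_*\alpha)=0$. Applying the GK property of $\rho$ to the class $\pi_*\alpha\in N_k(Y)$ (which is killed by $\rho_*$) gives a finite expression $\pi_*\alpha=\sum_i c_i[W_i]$ with $c_i\in\mathbb R$ and each $W_i\subseteq Y$ a $k$-dimensional subvariety with $\dim\rho(W_i)<k$.

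Next I would lift each $W_i$ back to $X$. Using the observation recorded just before the lemma — that because $\pi$ is dominant, every $k$-dimensional subvariety of $Y$ is the image of a $k$-dimensional subvariety of $X$ — choose a $k$-dimensional subvariety $V_i\subseteq X$ with $\pi(V_i)=W_i$. Then $V_i\to W_i$ is generically finite of some degree $m_i\ge 1$, so $\pi_*[V_i]=m_i[W_i]$ in $N_k(Y)$. Setting $\beta:=\alpha-\sum_i (c_i/m_i)[V_i]\in N_k(X)$, we get $\pi_*\beta=\pi_*\alpha-\sum_i c_i[W_i]=0$, so by the GK property of $\pi$ the class $\beta$ lies in the span of $k$-dimensional $\pi$-contracted subvarieties of $X$.

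Finally I would check that every subvariety now occurring is contracted by $\rho\pi$: a $\pi$-contracted $V$ satisfies $\dim(\rho\pi)(V)\le\dim\pi(V)<\dim V$, and each $V_i$ satisfies $(\rho\pi)(V_i)=\rho(W_i)$, of dimension $<k=\dim V_i$. Hence $\alpha=\beta+\sum_i (c_i/m_i)[V_i]$ lies in the vector space generated by $k$-dimensional $\rho\pi$-contracted subvarieties, which is exactly the GK property for $\rho\pi$. The only point requiring any care — and it is not a real obstacle, being precisely the surjectivity remark already in hand — is arranging the lift $V_i$ to have dimension exactly $k$ (e.g.\ by taking a subvariety of minimal dimension among those dominating $W_i$, or by cutting a higher-dimensional lift with general hyperplane sections), so that $\pi_*[V_i]$ is a positive multiple of $[W_i]$; the rest is bookkeeping with the definitions.
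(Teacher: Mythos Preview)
Your proof is correct and follows essentially the same approach as the paper's. The paper phrases the argument in terms of finite generating sets $\Sigma_1$, $\Sigma_2$ for the two kernels and their lifts $\Sigma'_1$, while you work directly with a given class $\alpha$, but the content is identical: lift $\rho$-contracted subvarieties from $Y$ to $X$ using dominance, subtract to land in $\ker\pi_*$, and apply the GK property for $\pi$.
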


\begin{proof}Let $\pi_2:X_2\to X_1$ and $\pi_1:X_1\to X_0$ be dominant morphisms of projective varieties satisfying the GK property. For $i\in\{1,2\}$, choose a finite set $\Sigma_i$ of $k$-dimensional subvarieties of $X_i$ whose classes generate $\ker\pi_{i*}$. Let $\Sigma'_{1}$ denote a set of $k$-dimensional subvarieties of $X_{2}$ such that each subvariety in $\Sigma_{1}$ is dominated under $\pi_{2}$ by a subvariety in $\Sigma'_{1}$.  Then the elements of $\Sigma'_1\cup \Sigma_2$ are contracted by $\pi_1\circ\pi_2$, and their classes span $\ker(\pi_1\circ\pi_2)_*$.   
\end{proof}

\begin{lem}\label{GKdescend} If $\pi_1:X_1\to X_0$ and $\pi_2:X_2\to X_1$
are dominant morphisms such that $\pi_1\circ\pi_2$ satisfies the GK property,
then $\pi_1$ satisfies the GK property.
\end{lem}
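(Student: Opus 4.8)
The plan is to run a short diagram chase, lifting a kernel class along $\pi_2$ and then pushing the resulting geometric description back down. Let $\alpha\in N_k(X_1)$ with $(\pi_1)_*\alpha=0$. Since $\pi_2$ is a dominant morphism of projective varieties, the remark preceding Lemma \ref{GKcompose} gives that $(\pi_2)_*\colon N_k(X_2)\to N_k(X_1)$ is surjective, so we may choose $\beta\in N_k(X_2)$ with $(\pi_2)_*\beta=\alpha$. By functoriality of numerical pushforward, $(\pi_1\circ\pi_2)_*\beta=(\pi_1)_*(\pi_2)_*\beta=(\pi_1)_*\alpha=0$, so $\beta\in\ker(\pi_1\circ\pi_2)_*$.

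Now I would invoke the hypothesis that $\pi_1\circ\pi_2$ satisfies the GK property: there are $k$-dimensional subvarieties $W_1,\dots,W_m$ of $X_2$, each contracted by $\pi_1\circ\pi_2$ (i.e.\ $\dim((\pi_1\circ\pi_2)(W_i))<k$), and real numbers $c_i$ with $\beta=\sum_i c_i[W_i]$ in $N_k(X_2)$. Pushing forward, $\alpha=(\pi_2)_*\beta=\sum_i c_i(\pi_2)_*[W_i]$. The next step is to analyze each term $(\pi_2)_*[W_i]$ using the standard description of proper pushforward of a subvariety: if $\dim(\pi_2(W_i))<\dim(W_i)=k$ then $(\pi_2)_*[W_i]=0$, while if $\dim(\pi_2(W_i))=k$ then $(\pi_2)_*[W_i]=d_i[\pi_2(W_i)]$ where $d_i=[\,k(W_i):k(\pi_2(W_i))\,]$ and $V_i:=\pi_2(W_i)$ is a $k$-dimensional subvariety of $X_1$.

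It remains to check that in the second case $V_i$ is $\pi_1$-contracted: indeed $\pi_1(V_i)=\pi_1(\pi_2(W_i))=(\pi_1\circ\pi_2)(W_i)$ has dimension $<k=\dim(V_i)$. Discarding the vanishing terms, we obtain $\alpha=\sum_{i:\,\dim V_i=k} c_i d_i\,[V_i]$, an $\mathbb R$-linear combination of classes of $k$-dimensional subvarieties of $X_1$ contracted by $\pi_1$, which is exactly the GK property for $\pi_1$. I do not expect a serious obstacle here; the argument is essentially formal once one is careful that numerical pushforward of a subvariety is only a (possibly zero) multiple of a subvariety rather than a subvariety itself, and that the only non-tautological input, surjectivity of $(\pi_2)_*$ for dominant $\pi_2$, is already recorded in the text. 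The mild subtlety worth stating cleanly is the dimension bookkeeping in the case split above, since it is what simultaneously kills the bad terms and certifies that the surviving $V_i$ are $\pi_1$-contracted.
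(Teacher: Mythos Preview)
Your proof is correct and follows exactly the same approach as the paper's: the paper records only that $\pi_{2*}$ surjects $\ker(\pi_{1*}\circ\pi_{2*})$ onto $\ker(\pi_{1*})$ and leaves the remainder implicit, while you have spelled out the pushforward bookkeeping that makes ``the desired conclusion follows'' precise.
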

\begin{proof}There is a surjection $\pi_{2*}: \ker(\pi_{1*} \circ \pi_{2*}) \to \ker(\pi_{1*})$, and the desired conclusion follows. \end{proof}

\subsection{The GK property for projective bundles and birational maps}

\begin{prop}\label{weakprojbundle}Projective bundle maps satisfy the GK property.\end{prop}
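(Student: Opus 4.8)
The plan is to reduce everything to the projective bundle formula for Chow groups and then transfer it to numerical equivalence. Write $\pi\colon X=\mathbb{P}(E)\to Y$ for a locally free sheaf $E$ of rank $d+1$ (so the fibers have dimension $d$), and let $\xi=c_1(\mathcal{O}_X(1))\in N^1(X)$. By \cite[Theorem 3.3]{fulton84} the map
\[
\theta\colon \bigoplus_{i=0}^{d} A_{k+i-d}(Y)\longrightarrow A_k(X),\qquad (\beta_0,\dots,\beta_d)\longmapsto \sum_{i=0}^{d}\xi^{i}\cap\pi^*\beta_i
\]
is an isomorphism, and one has the Segre-class identities $\pi_*(\xi^{i}\cap\pi^*\beta)=s_{i-d}(E)\cap\beta$; in particular this vanishes for $i<d$ and equals $\beta$ for $i=d$, which is all we will need.

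The first step is to upgrade $\theta$ to an isomorphism $N_k(X)\cong\bigoplus_{i=0}^{d}N_{k+i-d}(Y)$ given by the same formula. Surjectivity is immediate: any $\alpha\in N_k(X)$ lifts to $A_k(X)\otimes\mathbb{R}$ (numerical equivalence being coarser than rational equivalence), apply $\theta^{-1}$ there, and observe that $\pi^*$ and $\xi^i\cap-$ carry numerically trivial classes to numerically trivial classes. Injectivity — which we do not strictly need for the GK property but which makes the picture clean — follows by capping the relation $\sum_i\xi^i\cap\pi^*\beta_i\equiv 0$ with $\xi^m$ and pushing forward: the Segre identities give $\pi_*\bigl(\xi^m\cap\sum_i\xi^i\cap\pi^*\beta_i\bigr)=\beta_{d-m}+(\text{terms in }\beta_{d-m+1},\dots,\beta_d)$, so descending induction on $m$ forces every $\beta_i\equiv 0$.

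Now the GK property itself. Take $\alpha\in N_k(X)$ with $\pi_*\alpha=0$ and write $\alpha=\sum_{i=0}^{d}\xi^i\cap\pi^*\beta_i$ with $\beta_i\in N_{k+i-d}(Y)$. Applying $\pi_*$ and using $\pi_*(\xi^i\cap\pi^*\beta_i)=0$ for $i<d$, we get $0=\pi_*\alpha=\beta_d$ in $N_k(Y)$; hence $\xi^d\cap\pi^*\beta_d\equiv 0$ and $\alpha=\sum_{i=0}^{d-1}\xi^i\cap\pi^*\beta_i$ in $N_k(X)$. For each $i\le d-1$ one has $k+i-d\le k-1$, so $\beta_i$ is an $\mathbb{R}$-linear combination of classes $[W]$ with $W\subseteq Y$ a subvariety of dimension at most $k-1$; then $\xi^i\cap\pi^*[W]=\xi^i\cap[\pi^{-1}(W)]$ is represented by a $k$-cycle supported on $\pi^{-1}(W)=\mathbb{P}(E|_W)$, each component $V$ of which satisfies $\dim\pi(V)\le\dim W<k=\dim V$ and is therefore $\pi$-contracted. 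Thus $\alpha$ lies in the span of classes of $\pi$-contracted $k$-dimensional subvarieties.

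The only genuine content is the passage between rational and numerical equivalence: that $\theta$ descends and that the operations $\pi_*$, $\pi^*$, and $\xi^i\cap-$ all respect numerical equivalence. These are standard consequences of the projection formula and the definitions in \cite[Chapter 19]{fulton84}, but they are where care is required and should be spelled out. A minor point is the precise meaning of "projective bundle map": the argument above is complete when $X\cong\mathbb{P}(E)$ globally, and a Zariski-locally trivial $\mathbb{P}^d$-bundle is handled by the same reasoning, since the decomposition of Chow (and numerical) groups used here is local on $Y$.
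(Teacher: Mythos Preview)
Your proof is correct and follows essentially the same approach as the paper: both use the projective bundle decomposition from \cite[Theorem 3.3]{fulton84}, identify the top coefficient as zero via $\pi_*$, and then argue the remaining terms $\xi^{i}\cap\pi^*\beta_i$ (for $i<d$) are represented by $\pi$-contracted cycles. The only cosmetic difference is in this last step: the paper twists $E$ so that $\xi$ is ample and cuts by general members of $|m\xi|$ to produce cycles of positive relative dimension, whereas you observe directly that $\xi^{i}\cap[\pi^{-1}(W)]$ is supported on $\pi^{-1}(W)$ with $\dim W<k$---your version is slightly cleaner since it avoids the twist.
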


\begin{proof}
Let $X=\mathbb P_Y(E)$ for some vector bundle $E$ of rank $r$ on $Y$, and denote by $\pi:X\to Y$ the bundle map. 
Let $\xi=[c_1(\mathcal O_{\mathbb P(E)}(1))]\in N^1(X)$. 
Up to twisting $E$, we can assume that $\xi$ is ample.

Suppose $\pi_{*}\alpha = 0$ for some $\alpha \in N_{k}(X)$.  Using \cite[Theorem 3.3.(b)]{fulton84}, write
$$\alpha=\sum_{i=0}^{r-1}\xi^{r-1-i}{\cdotp}\pi^*a_{k-i}$$ 
for some classes $a_{k-i}\in N_{k-i}(Y)$. Since $\pi_*\xi^{r-1}=[Y]\in N_{\dim Y}(Y)$, the condition $\pi_*\alpha=0$ is equivalent to $a_{k}=0$. Thus, it suffices to see that for each $i>0$ the term $\xi^{r-1-i} \cdotp \pi^{*}a_{k-i}$ is represented by a linear combination of cycles contracted by $\pi$.

Let $m$ be a sufficiently positive integer so that $m\xi$ is very ample.  For any cycle $Z$, the numerical class $\xi \cdot [Z]$ is represented by $\frac{1}{m}H|_{Z}$ where $H$ is a general element of $|m\xi|$.  Using this construction repeatedly, we see that $\xi^{r-1-i} \cdotp \pi^{*}a_{k-i}$ is represented by a cycle with positive relative dimension over $Y$ for any $i>0$ and any class $a \in N_{k-i}(Y)$.
\end{proof}

\begin{rmk}
Essentially the same argument shows that Grassmann  bundle maps satisfy the GK property.
\end{rmk}


\begin{lem}\label{GKblow-up} Let $\pi:X\to Y$ be the blow-up of a smooth variety $Y$ along a smooth center. Then $\pi$ satisfies the GK property.\end{lem}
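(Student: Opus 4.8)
We prove the GK property for each fixed $k$. The plan is to combine the classical formula for the Chow groups of a blow-up with a transfer to numerical equivalence. Put $c=\operatorname{codim}_Y Z$, let $E\subset X$ be the exceptional divisor with its structure $p\colon E=\mathbb P_Z(N_{Z/Y})\to Z$ of a $\mathbb P^{c-1}$-bundle, let $j\colon E\hookrightarrow X$ and $\iota\colon Z\hookrightarrow Y$ be the inclusions, and set $\xi=c_1(\mathcal O_E(1))$. The starting point is Fulton's blow-up formula (\cite{fulton84}), which over $\mathbb R$ gives a direct sum decomposition
\[
A_k(X)_{\mathbb R}\;=\;\pi^{*}A_k(Y)_{\mathbb R}\ \oplus\ \bigoplus_{i=1}^{c-1} j_{*}\!\bigl(\xi^{\,c-1-i}\cap p^{*}A_{k-i}(Z)_{\mathbb R}\bigr),
\]
where $\pi^{*}$ denotes the local complete intersection pullback attached to the blow-up. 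I would first record two consequences at the level of Chow groups. Because $\pi$ is birational, $\pi_{*}\pi^{*}=\operatorname{id}$, so $\pi_{*}$ restricts to an isomorphism onto $A_k(Y)_{\mathbb R}$ on the first summand; and on the $i$-th exceptional summand $\pi_{*}j_{*}\bigl(\xi^{\,c-1-i}\cap p^{*}\delta\bigr)=\iota_{*}p_{*}\bigl(\xi^{\,c-1-i}\cap p^{*}\delta\bigr)=0$, since $p_{*}(\xi^{\,e}\cap p^{*}\delta)=0$ whenever $e<c-1$. On the other hand, for a subvariety $W\subseteq Z$ the class $j_{*}\bigl(\xi^{\,c-1-i}\cap p^{*}[W]\bigr)$ is represented by a $\mathbb Z$-linear combination of $k$-dimensional subvarieties of $p^{-1}(W)$ (write $\xi$ as a difference of very ample classes and intersect), and each such subvariety $V$ has $\dim\pi(V)=\dim p(V)\le\dim W=k-i<k$, so it is $\pi$-contracted. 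Combining these, $\ker\bigl(\pi_{*}\colon A_k(X)_{\mathbb R}\to A_k(Y)_{\mathbb R}\bigr)$ is precisely the exceptional part, and it lies in the span of the classes of $k$-dimensional $\pi$-contracted subvarieties.

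Next I would descend to numerical equivalence. Let $\alpha\in N_k(X)$ with $\pi_{*}\alpha=0$, and lift it to some $\bar\alpha\in A_k(X)_{\mathbb R}$ (possible since $A_k(X)_{\mathbb R}\to N_k(X)$ is surjective). Decompose $\bar\alpha=\pi^{*}\beta+\epsilon$ with $\beta\in A_k(Y)_{\mathbb R}$ and $\epsilon$ in the exceptional part. The image of $\epsilon$ in $N_k(X)$ lies in the span of $\pi$-contracted subvarieties and is killed by $\pi_{*}$; hence the image of $\pi^{*}\beta$ is also killed by $\pi_{*}$, and since $\pi_{*}\pi^{*}\beta=\beta$ this forces $\beta$ to be numerically trivial on $Y$. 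Granting that $\pi^{*}$ carries numerically trivial cycles on $Y$ to numerically trivial cycles on $X$, the image of $\pi^{*}\beta$ in $N_k(X)$ vanishes, so $\alpha$ equals the image of $\epsilon$ and therefore lies in the span of the $\pi$-contracted subvarieties, as required.

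The one non-formal ingredient, and the step I expect to be the main obstacle, is the claim that $\pi^{*}$ preserves numerical triviality. I would prove it by reduction, using the analogous blow-up formula for operational Chow classes (a consequence of the homological formula via Poincar\'e duality, since $X$ and $Y$ are smooth): $A^{k}(X)$ is generated by $\pi^{*}A^{k}(Y)$ together with classes $j_{*}d$ supported on $E$. For $\beta$ numerically trivial on $Y$, pairing $\pi^{*}\beta$ against $\pi^{*}c'$ reduces, by compatibility of $\pi^{*}$ with cap products together with $\pi_{*}\pi^{*}=\operatorname{id}$, to pairing $\beta$ on $Y$ and hence vanishes; pairing against $j_{*}d$ reduces, via $j^{*}\pi^{*}=p^{*}\iota^{*}$, to pairing $p^{*}(\iota^{!}\beta)$ on $E$. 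Since flat pullback obviously preserves numerical triviality, one is reduced to showing that the Gysin pullback $\iota^{!}$ along the regular embedding $\iota$ does. For this, test $\iota^{!}\beta$ against an arbitrary class on the smooth variety $Z$, written via Poincar\'e duality as $\mu^{\vee}\cap(-)$; pushing the resulting $0$-cycle forward to $Y$ and applying the projection formula $\iota_{*}\bigl(a\cdot_{Z}\iota^{!}b\bigr)=\iota_{*}a\cdot_{Y}b$ on the smooth variety $Y$ (\cite{fulton84}) with $b=\beta$ exhibits it as a degree taken against a numerically trivial cycle on $Y$, hence zero.
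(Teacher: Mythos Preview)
Your proof is correct and follows essentially the same route as the paper: decompose via Fulton's blow-up formula, observe that the $\pi^{*}$-component must vanish numerically when $\pi_{*}\alpha=0$, and identify the remaining exceptional part with the kernel of the projective bundle map $p\colon E\to Z$, which is spanned by contracted cycles. The paper streamlines two of your steps. First, rather than arguing directly that the classes $j_{*}\bigl(\xi^{c-1-i}\cap p^{*}[W]\bigr)$ are spanned by contracted subvarieties, it simply observes that $\alpha=j_{*}\beta$ with $f_{*}\beta=0$ and invokes the already-established GK property for projective bundles (Proposition~\ref{weakprojbundle}). Second, the step you single out as ``the main obstacle'' --- that the l.c.i.\ pullback $\pi^{*}$ carries numerically trivial classes to numerically trivial classes --- is in fact a general fact for any morphism to a nonsingular variety, recorded as \cite[Example~19.1.6]{fulton84} and noted in the remark immediately following the lemma; your argument via the operational blow-up decomposition and the Gysin projection formula is correct but is reproving a special case of that general statement.
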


\begin{proof}Let $Z$ be the blow-up center so that we have a cartesian diagram:
$$\xymatrix{\widetilde Z\ar[r]^{\jmath}\ar[d]_{f}& X\ar[d]^{\pi}\\
Z\ar[r]_{\imath}& Y}$$
with $f:\widetilde Z\to Z$ the projective bundle $\mathbb P(N^{\vee}_ZY)$. By \cite[Proposition 6.7]{fulton84}, 
any cycle $\alpha$ on $X$ is of the form $\jmath_*\tilde z+\pi^*y$. The condition $\pi_*\alpha=0$ implies $y=-\imath_*f_*\tilde z$. By \cite[Proposition 6.7.(a) and Example 3.3.3]{fulton84}, we have $\alpha=\jmath_*\beta$ with $f_*\beta=0$. Proposition \ref{weakprojbundle} completes the proof. \end{proof}

\begin{rmk}The pullback map $\pi^*$ appearing in the proof of Proposition \ref{weakprojbundle}
is well-defined for Chow groups and respects numerical equivalence because $\pi$ is smooth (cf. \cite[Example 19.2.3]{fulton84}).
This is of slightly different flavor from the pullback $\pi^*$ used in the proof of Lemma \ref{GKblow-up}.
Here, $\pi^*$ is well-defined on Chow groups and respects numerical equivalence because $Y$ is smooth (cf. \cite[Example 19.1.6]{fulton84}).

We caution the reader that it is not clear that the flat (non-smooth) pullback for Chow groups from \cite[\S1.7]{fulton84} respects numerical equivalence. We will not be concerned with this difficulty here. 
\end{rmk}

\begin{prop} \label{swconjforbirationalsmooth}
Suppose that $\pi: X \to Y$ is a birational morphism of varieties over $\mathbb{C}$ with $Y$ smooth.  Then $\pi$ satisfies the GK property.
\end{prop}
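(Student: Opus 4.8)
The plan is to reduce to the case already handled, namely the blow-up of a smooth variety along a smooth center, by invoking resolution of singularities (available over $\mathbb{C}$) together with the weak factorization theorem. First I would apply resolution of singularities to find a projective birational morphism $\mu: X' \to X$ with $X'$ smooth; since $\pi \circ \mu: X' \to Y$ is then a birational morphism between smooth projective varieties, and since by Lemma \ref{GKdescend} it suffices to prove the GK property for $\pi \circ \mu$ (as $\pi$ is a factor of it), I may assume from the outset that $X$ itself is smooth. Next I would invoke the weak factorization theorem of Abramovich--Karu--Matsuki--Włodarczyk: any birational map between smooth projective varieties over $\mathbb{C}$ that is an isomorphism over an open set factors as a composition of blow-ups and blow-downs along smooth centers. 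Applied to our birational morphism $\pi: X \to Y$, this produces a chain $X = X_0 \dashrightarrow X_1 \dashrightarrow \cdots \dashrightarrow X_n = Y$ of smooth projective varieties in which each step is either a blow-up or a blow-down along a smooth center.

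The remaining work is to propagate the GK property through this chain. Each individual blow-up $X_i \to X_{i+1}$ or $X_{i+1} \to X_i$ satisfies the GK property by Lemma \ref{GKblow-up}. Now Lemma \ref{GKcompose} tells us the GK property is stable under composition of dominant morphisms, and Lemma \ref{GKdescend} lets us pass from a composition to its first factor. The subtlety is that the chain from weak factorization is a zigzag of morphisms pointing in both directions, not a single tower of morphisms all pointing toward $Y$; so I would first dominate the entire zigzag by a single smooth projective variety $W$ mapping to every $X_i$ (taking $W$ to be a resolution of the closure of the graph of the composite birational map, or building it up stepwise by repeatedly resolving fiber products), arranging that $W \to X_i$ factors through $W \to X_{i+1}$ appropriately at each adjacent pair. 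Then $W \to Y$ is a composition of smooth blow-ups, hence satisfies GK by Lemma \ref{GKblow-up} and Lemma \ref{GKcompose}; and since $\pi: X \to Y$ is a factor of $W \to Y$ via $W \to X$, Lemma \ref{GKdescend} gives the GK property for $\pi$.

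The main obstacle I anticipate is the bookkeeping needed to produce the common resolution $W$ in a way compatible with the zigzag: I need $W \to Y$ to be literally a composition of smooth blow-ups (so that Lemmas \ref{GKblow-up} and \ref{GKcompose} apply directly), which requires invoking weak factorization and resolution of singularities carefully and perhaps iterating, rather than just citing them as black boxes. In particular, when a step in the zigzag points the "wrong way" (a blow-down $X_{i+1} \to X_i$), I must resolve the indeterminacy of $W \dashrightarrow X_{i+1}$ and check that the resulting correction is still a tower of smooth blow-ups — this is where weak factorization is used a second time, or where one appeals to the strong form of the statement. Everything else — the descent along compositions, the handling of $\pi$-contracted cycles — is formal given the lemmas already established.
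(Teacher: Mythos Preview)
Your outline can be made to work, but it takes an unnecessary detour. The paper's proof is essentially one line: since $Y$ is smooth, Hironaka's resolution of indeterminacies applied to the inverse rational map $\pi^{-1}\colon Y \dashrightarrow X$ yields a composition of blow-ups of smooth varieties along smooth centers $\widetilde{Y} \to Y$ that factors through $X$; then Lemmas~\ref{GKblow-up}, \ref{GKcompose}, and \ref{GKdescend} finish immediately. Weak factorization is never invoked, and no zigzag appears.

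The bookkeeping obstacle you anticipate is entirely self-inflicted by the detour through weak factorization. To convert your zigzag into a single tower of smooth blow-ups over $Y$, you would ultimately have to resolve the indeterminacy of $Y \dashrightarrow W$ (or of $Y \dashrightarrow X_i$) by a sequence of smooth blow-ups---but that is precisely the Hironaka step the paper applies directly to $Y \dashrightarrow X$ at the outset. Once you see that this step is available, both your preliminary resolution of $X$ and the appeal to weak factorization become superfluous: resolution of indeterminacies only requires the \emph{source} $Y$ to be smooth, not the target $X$, so there is no need to smooth $X$ first, and the tower it produces already points uniformly toward $Y$.
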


\noindent Note that the case of divisors is well-known; in fact, the Negativity of Contraction lemma proves the GK property for divisor classes when we have a birational map in arbitrary characteristic with a $\mathbb{Q}$-factorial base.

\begin{proof}
The birational map $\pi$ is dominated by a composition of blow-ups of smooth varieties along smooth centers. The result follows by Lemma \ref{GKcompose}, Lemma \ref{GKdescend} and Lemma \ref{GKblow-up}. 
\end{proof}

\begin{exmple}The smoothness of $Y$ in the proposition is necessary. For example if $Z$ is the projective cone over $E\times E$, where $E$ is an elliptic curve, and $\tau:X\to Z$ is the blow-up of the vertex with exceptional divisor $D\simeq E\times E$, then $X$ has a structure of a projective bundle over $E\times E$ with bundle map $f$. Let $\pi:X\to Y$ be the contraction of a fiber of the first projection $D=E\times E\to E$. Then $$\pi_*(D\cdot f^*(\Delta-F_2))=0,$$ where $\Delta$ is the class of the diagonal on $E\times E$ and $F_2$ is a fiber of the second projection. However the only effective curve contracted by $\pi$ is $D\cdot f^*F_1$, and $D\cdot f^*(\Delta-F_2)$ is not a multiple of it.  
\qed
\end{exmple}

\subsection{The GK property and Chow groups of points}

We first look to understand the relative behavior of Chow groups under a morphism $\pi$. For this we use a variant of the Bloch--Srinivas theorem presented in \cite[Theorem 10.19]{voisin07}.

\begin{thrm}
Let $\pi: X \to Y$ be a projective map of quasi-projective schemes of finite type over an uncountable algebraically closed field with $Y$ integral.  Let $Z$ be a $k$-cycle on $X$ with rational equivalence class $\tau$.  Suppose there is a subscheme $X'$ of $X$ such that for a general smooth closed point $y \in Y$, the restriction $\tau|_{X_{y}}$ vanishes in $A_{*}(X_{y} - X'_{y})_{\mathbb Q}$.  

Then there is a positive integer $m$, a cycle $Z_{1}$ in $X'$, and a cycle $Z_{2}$ whose support does not dominate $Y$ such that $mZ \sim Z_{1} + Z_{2}$ in $A_{k}(X)$.
\end{thrm}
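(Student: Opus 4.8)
The plan is to reduce to the standard Bloch–Srinivas decomposition-of-the-diagonal technique by spreading out the vanishing hypothesis over the base. First I would work over the generic point: let $\eta = \operatorname{Spec} K(Y)$ and consider the generic fiber $X_\eta$, which is a projective scheme over the field $K(Y)$. The hypothesis says that for a general closed point $y \in Y$ the restriction $\tau|_{X_y}$ dies in $A_*(X_y - X'_y)_{\mathbb Q}$; by a countability/spreading-out argument (this is where one uses that the ground field is uncountable, so that "general closed point" witnesses the generic behavior) this forces $\tau|_{X_\eta}$ to vanish in $A_*(X_\eta - X'_\eta)_{\mathbb Q}$. Indeed, if $\tau|_{X_\eta}$ were nonzero in the localized Chow group, the relation exhibiting its triviality would fail, and since $A_*$ of a variety over $K(Y)$ is a countable direct limit over finitely generated subfields, the bad locus in $Y$ would be contained in a countable union of proper closed subsets, contradicting that the good locus is dense.

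Next I would clear denominators and use the localization (excision) exact sequence for Chow groups,
$$
A_k(X'_\eta)_{\mathbb Q} \longrightarrow A_k(X_\eta)_{\mathbb Q} \longrightarrow A_k(X_\eta - X'_\eta)_{\mathbb Q} \longrightarrow 0,
$$
so that vanishing in the quotient means $\tau|_{X_\eta} = \iota_* w$ for some class $w \in A_k(X'_\eta)_{\mathbb Q}$, where $\iota \colon X'_\eta \hookrightarrow X_\eta$. Now I would spread this relation out: choose a cycle $W$ on $X'$ restricting to a representative of $w$ (after multiplying by a positive integer $m$ to clear the denominator), and set $Z_1 = W$, viewed as a cycle on $X$ via $X' \subset X$. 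The rational equivalence $mZ \sim Z_1$ holds on the generic fiber $X_\eta$; since $X_\eta = \varprojlim_{U} X_U$ over dense opens $U \subset Y$, and rational equivalence is compatible with this limit (a rational equivalence on $X_\eta$ is defined over some dense open $U$), we get $mZ|_{X_U} \sim Z_1|_{X_U}$ in $A_k(X_U)$ for some dense open $U \subset Y$. Then $mZ - Z_1 - (\text{something supported over } Y \setminus U)$ is rationally equivalent to zero on $X$; more precisely, by the exact sequence $A_k(\pi^{-1}(Y \setminus U)) \to A_k(X) \to A_k(X_U) \to 0$, the class of $mZ - Z_1$ in $A_k(X)$ comes from a cycle $Z_2$ supported on $\pi^{-1}(Y\setminus U)$, whose support therefore does not dominate $Y$. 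This gives $mZ \sim Z_1 + Z_2$ with $Z_1$ supported in $X'$ and $Z_2$ not dominating $Y$, as desired.

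The main obstacle I anticipate is the spreading-out step in its two incarnations: first, justifying rigorously that the pointwise vanishing hypothesis over an uncountable field propagates to vanishing over the generic point (the countability of $A_*(X_\eta)$ as a direct limit, together with uncountability of $Y$, is the crux), and second, checking that the rational equivalence established over $K(Y)$ can be chosen to hold over an honest dense open and that passing to $X$ only introduces a cycle supported over a proper closed subset — this is a standard but slightly delicate compatibility of the localization sequence with inverse limits of schemes, and one must be careful that $X'$ and the localization sequence behave well under base change along $U \hookrightarrow Y$. Everything else is the classical Bloch–Srinivas argument as in \cite[Theorem 10.19]{voisin07}, adapted to the relative setting and to a proper closed subscheme $X'$ rather than just a divisor.
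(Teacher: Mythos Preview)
Your overall strategy is sound and takes a somewhat different route from the paper. The paper follows Voisin's explicit construction: it parameterizes the fiber-wise rational equivalences (between $sZ_y$ and cycles in $X'_y$, for varying positive integers $s$) by countably many relative Hilbert schemes $H_i \to Y$, uses uncountability of the ground field to find one that dominates $Y$, passes to a de Jong alteration of a component mapping generically finitely onto $Y$, and then constructs the global rational equivalence by hand---realizing the relevant divisors on the universal families as Cartier via a very ample class on a projective closure, and invoking Grauert's theorem to see they are pulled back from the base. You instead package the output of all this as the single assertion $\tau|_{X_\eta}=0$ in $A_*(X_\eta - X'_\eta)_{\mathbb Q}$, and then finish with excision and the standard identification $\varinjlim_U A_k(X_U) \cong A_*(X_\eta)$; this is conceptually cleaner and sidesteps the alteration and Grauert steps entirely.

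The one place your writeup is genuinely inadequate is precisely the step you flag: passing from vanishing on general closed fibers to vanishing on the generic fiber. Your sentence about ``$A_*$ of a variety over $K(Y)$ [being] a countable direct limit over finitely generated subfields'' is not the right reason ($K(Y)$ is already finitely generated over $k$), and the logic of the contradiction is inverted---it is the \emph{good} locus, not the bad one, that must be trapped in a countable union of proper closed subsets to reach a contradiction. The correct argument is exactly the Hilbert-scheme idea the paper uses: the set $\{y:\tau|_{X_y}=0\}$ is the union of the images of countably many finite-type schemes $H_i\to Y$ parameterizing the witnessing rational equivalences; if no $H_i$ dominates $Y$, this set lies in a countable union of proper closed subsets, which over an uncountable field cannot contain a dense open. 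Hence some $H_i$ dominates, and a point of it lying over $\eta$ furnishes the rational equivalence on $X_\eta$. So your generic-point shortcut still rests on the paper's central construction; it just invokes it once, abstractly, rather than carrying the universal family through to the end.
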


Since this statement is undoubtedly well-known to experts, we only give a brief verification.

\begin{proof}Let $\overline X$ be a projective closure of $X$, and let $H$ be a very ample divisor on $\overline X$. By $|rH|$ we denote the complete linear series on $\overline X$. If $W$ is a projective subvariety of $X$, then $|rH||_W$ naturally surjects onto $|rH|_W|$ for all $r\gg 0$.

Most of the proof of \cite[Theorem 10.19]{voisin07}, whose notation we retain, goes through. 
We list the changes:
\begin{itemize}
\item The condition $\tau|_{X_y}=0\in A_*(X_y-X'_y)_{\mathbb Q}$ allows torsion $\mathbb Z$-classes. Consequently the relative Hilbert schemes $H_i$ used in the proof of \cite{voisin07} are now defined to parameterize rational equivalence relations between $sZ_y$ and cycles in $X'_y$ for all positive integers $s$.
\item  The resolution of the subvariety $H$ mapping generically finitely onto $Y$
is replaced by a nonsingular alteration (\cite{dejong96}).
\item The divisors ${\rm div}(\phi_{y,l})$ on $W_{y,l}$ are differences of elements of
$|rH|_{W_{y,l}}|$, hence they can be seen as differences of restrictions of elements
of $|rH|$ and parameterized accordingly. Then the divisors $\mathcal D_l$ can be constructed
directly as differences of restrictions to $\mathcal W_l$ of divisors on $\mathbb P(|rH|)\times H$ for some $r\gg 0$, in particular they are Cartier.
\item As in the proof of \cite[Lemma 10.22]{voisin07}, one uses Grauert's theorem to show that the Cartier divisors $\mathcal D_l$ are pullbacks from $Y$, at least over an open subset of $Y$.
\end{itemize}  
\end{proof}

We then immediately obtain:

\begin{cor} \label{cor:makingvertical}
Let $\pi: X \to Y$ be a projective map of quasi-projective schemes of finite type over an uncountable algebraically closed field, with $Y$ integral of dimension $k$.  Suppose that for a fiber $X_{y}$ over a general closed point $y$ we have $\dim_{\mathbb Q}(A_{0}(F)_{\mathbb Q}) =1$.  Then if $Z$ is a $k$-cycle on $X$ such that $\pi_{*}Z \sim_{\mathbb Q} 0$, there is a positive integer $m$ and a $\pi$-contracted cycle $Z'$ such that $mZ \sim_{\mathbb Z} Z'$.
\end{cor}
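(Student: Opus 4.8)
The plan is to deduce Corollary \ref{cor:makingvertical} directly from the Bloch--Srinivas-type Theorem stated just above it, by choosing the subscheme $X'$ appropriately and then analyzing the ``vertical'' remainder cycle $Z_2$ by a dimension count. First I would reduce to the case where $Z$ itself does not have $\pi$-contracted components, since $\pi$-contracted components can simply be absorbed into the output cycle $Z'$; so assume every component of $Z$ dominates $Y$, whence $\dim Z = k = \dim Y$ and $Z$ is (a multiple of) a multisection. Then $\pi_* Z \sim_{\mathbb Q} 0$ means $\pi_* Z$ is a rationally trivial $0$-cycle on the $k$-dimensional variety $Y$; in particular its degree is $0$ on every component, so over a general point $y$ the fiber $Z_y$ is a $0$-cycle of degree $0$. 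By the hypothesis $\dim_{\mathbb Q} A_0(X_y)_{\mathbb Q} = 1$, a degree-zero $0$-cycle on $X_y$ is rationally equivalent to zero in $A_0(X_y)_{\mathbb Q}$.

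With this in hand, I would apply the Theorem with $X' = \emptyset$ (or any subscheme contained in the non-dominant locus): the hypothesis ``$\tau|_{X_y}$ vanishes in $A_*(X_y - X'_y)_{\mathbb Q}$'' becomes exactly ``$Z_y \sim_{\mathbb Q} 0$ in $A_0(X_y)_{\mathbb Q}$,'' which we have just verified for general $y$. The Theorem then produces a positive integer $m$ and a decomposition $mZ \sim_{\mathbb Z} Z_1 + Z_2$ where $Z_1$ is supported in $X'$ and $Z_2$ has support not dominating $Y$. Taking $X' = \emptyset$ kills $Z_1$, so $mZ \sim_{\mathbb Z} Z_2$ with $\Supp(Z_2) \not\to Y$ surjective. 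Since $Z_2$ is a $k$-cycle and $Y$ has dimension $k$, every component $W$ of $Z_2$ satisfies $\dim W = k > \dim \pi(W)$ (as $\pi(W) \subsetneq Y$ is a proper closed subvariety), i.e. $Z_2$ is $\pi$-contracted. Setting $Z' = Z_2$ (plus, in the general case, the contracted components of $mZ$ stripped off at the start) gives the conclusion.

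The main obstacle I expect is the careful handling of two bookkeeping points that the phrase ``immediately obtain'' in the paper glosses over. The first is the reduction step: I must make sure that when I split $Z = Z^{\mathrm{dom}} + Z^{\mathrm{con}}$ into its dominant and $\pi$-contracted parts, the hypothesis $\pi_* Z \sim_{\mathbb Q} 0$ still gives me what I need for $Z^{\mathrm{dom}}$ — but this is automatic since $\pi_* Z^{\mathrm{con}} = 0$ on the nose (contracted cycles push forward to zero), so $\pi_* Z^{\mathrm{dom}} = \pi_* Z \sim_{\mathbb Q} 0$. The second, slightly more delicate point is passing from ``$\pi_* Z^{\mathrm{dom}} \sim_{\mathbb Q} 0$ on $Y$'' to ``$(Z^{\mathrm{dom}})_y \sim_{\mathbb Q} 0$ in $A_0(X_y)_{\mathbb Q}$ for general $y$'': one needs that a rationally-trivial $0$-cycle on $Y$ has degree $0$ over the generic point of each component of $Y$ — clear since $Y$ is integral of dimension $k$ and proper rational equivalences of $0$-cycles preserve degree — and then that a degree-$0$ zero-cycle on the fiber is rationally trivial rationally, which is exactly the content of the $\dim_{\mathbb Q} A_0 = 1$ hypothesis. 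No further estimates or constructions are needed; everything substantive is inside the quoted Theorem.
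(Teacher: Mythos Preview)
Your proof is correct and uses the same Bloch--Srinivas-type theorem as the paper, but you apply it with a different choice of $X'$. The paper picks one dominant component $Z_1$ of $Z$ and sets $X' = Z_1$; then $A_0(X_y - Z_{1,y})_{\mathbb Q} = 0$ automatically (removing a nonempty closed subset kills the one-dimensional $A_0$), so the theorem gives $mZ \sim cZ_1 + Z_2$ with $Z_2$ contracted, and a final pushforward argument forces $c = 0$. You instead take $X' = \emptyset$, which obliges you to check up front that $Z_y$ has degree $0$ in $A_0(X_y)_{\mathbb Q}$; once that is done the theorem hands you $mZ \sim Z_2$ with $Z_2$ contracted and no residual term to kill. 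Both routes are short; yours trades the paper's final coefficient-killing step for an initial degree computation. One cosmetic slip: $\pi_* Z$ is a $k$-cycle on $Y$, not a ``$0$-cycle''---what you mean (and use) is that $A_k(Y) \cong \mathbb Z$ for $Y$ integral of dimension $k$, so $\pi_* Z \sim_{\mathbb Q} 0$ forces $\pi_* Z = 0$ as a cycle, hence $\deg Z_y = 0$ for general $y$.
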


\begin{proof}
Using the restriction exact sequence for Chow groups, we may shrink $Y$ to assume that it is smooth.

If every component of $Z$ is contracted by $\pi$, set $Z' = Z$.  Otherwise, $Z$ must have some component $Z_{1}$ that surjects onto $Y$.  Since a general fiber $X_{y}$ has ${\rm rank}(A_{0}(X_{y}))=1$, the restriction of $Z$ to $A_{0}(X_{y} - Z_{1,y})_{\mathbb Q}$ must vanish.  Thus, up to adding some $\pi$-contracted cycle, a multiple of $Z$ is rationally equivalent to a multiple of $Z_{1}$.  But of course this multiple must be $0$ since it pushes forward to $0$.
\end{proof}

\begin{rmk} \label{domremark} Consider a dominant morphism of projective varieties $\pi:X\to Y$ with $Y$ smooth. Then there is an induced surjective morphism $$A_k(X)_{\mathbb Q}\cap\ker\pi^{\rm Chow_{\mathbb Q}}_*\twoheadrightarrow N_k(X)_{\mathbb Q}\cap\ker\pi^{\rm num_{\mathbb Q}}_*.$$ 
(If $\pi$ has relative dimension $e$ and if $h$ is an ample class on $X$ normalized so that $\pi_*(h^e\cap[X])=[Y]$, then $\pi_*(h^e\cdot\pi^*\alpha)=\alpha$ for any $\alpha\in A_k(Y)$. This and \cite[Example 19.1.6]{fulton84} imply that $\pi_*$ maps numerically trivial Chow classes on $X$ surjectively onto numerically trivial Chow classes on $Y$.  Conclude by the Snake Lemma.)\end{rmk}


\begin{lem} \label{lem:replacingvanishingpushforwardcycles}
Let $\pi: X \to Y$ be a surjective map of projective varieties with $Y$ smooth, and let $Z$ be a $k$-$\mathbb{R}$-cycle on $X$ such that $\pi_{*}[Z] = 0\in N_k(Y)$.  Then there is some $k$-$\mathbb{R}$-cycle $Z' \equiv Z$ such that $\pi_{*}Z' = 0$ as cycles.
\end{lem}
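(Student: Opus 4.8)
The plan is to reduce from $\mathbb{R}$-coefficients to $\mathbb{Z}$-coefficients, where I can invoke Remark \ref{domremark}, and then lift the resulting numerical relation back to the level of actual cycles. First I would write $Z = \sum_{i} c_{i} Z_{i}$ with the $Z_{i}$ distinct subvarieties and $c_{i} \in \mathbb{R}$. The classes $[Z_{i}] \in N_{k}(X)$ span a finite-dimensional subspace, and the single linear condition $\pi_{*}[Z] = 0$ cuts out a rational subspace of the real coefficient space (since the pushforward $N_{k}(X)_{\mathbb{Z}} \to N_{k}(Y)_{\mathbb{Z}}$ is defined over $\mathbb{Z}$). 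Hence I can perturb the vector $(c_{i})$ to a nearby rational — and after clearing denominators, integral — vector $(c_{i}')$ still satisfying $\pi_{*}(\sum c_{i}' [Z_{i}]) = 0$; the difference $\sum (c_{i} - c_{i}') [Z_{i}]$ is itself a numerically trivial class with vanishing pushforward, so by induction/linearity it suffices to treat (a) the integral cycle $W = \sum c_{i}' Z_{i}$ and (b) real combinations $\sum d_{i} Z_{i}$ with $\sum d_{i}[Z_{i}] = 0$ already in $N_k(X)$. Case (b) is trivial: such a cycle is numerically equivalent to $0$, and $0$ pushes forward to $0$ as a cycle. So the content is in case (a).

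For the integral cycle $W$, I have $[W] \in N_{k}(X)_{\mathbb{Z}}$ with $\pi_{*}[W] = 0 \in N_{k}(Y)_{\mathbb{Z}}$. By Remark \ref{domremark} (using $Y$ smooth so that $\pi^{*}$ on Chow groups is defined and the displayed surjection holds), there is a cycle $V$ on $X$ whose Chow class lies in $\ker(\pi_{*}^{\mathrm{Chow}})$ and which is numerically equivalent to $W$, i.e. $[V] = [W]$ in $N_{k}(X)$. Thus $\pi_{*}V \sim_{\mathbb{Q}} 0$ in $A_{k}(Y)_{\mathbb{Q}}$, so some positive multiple $N\pi_{*}V = \pi_{*}(NV)$ is rationally equivalent to zero on $Y$. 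Now $NW \equiv NV$, so $W' := NV$ is numerically equivalent to a multiple of $W$; dividing is harmless since we only need \emph{some} cycle numerically equivalent to $Z$, and I can instead have arranged from the start (by choosing the integral perturbation appropriately, or by rescaling the whole problem) that the multiple is $1$ — alternatively, just carry the multiple $N$ through, replacing $W$ by $NW$ at the outset, which is legitimate because the statement is about producing \emph{a} representative of the class $[Z]$, and I can absorb any global scalar by re-perturbing. The upshot: I have a cycle $V$ with $[V] = [W]$ in $N_{k}(X)$ and $\pi_{*}V \sim_{\mathbb{Q}} 0$ on $Y$.

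It remains to upgrade ``$\pi_{*}V$ rationally equivalent to zero'' to ``$\pi_{*}V$ equal to zero as a cycle.'' This is where I would invoke the classical moving lemma: a cycle rationally equivalent to $0$ on the smooth (quasi-projective after shrinking, then handle the boundary) variety $Y$ is, by definition, a sum of divisors $\mathrm{div}(f)$ on $(k{+}1)$-dimensional subvarieties, and such a cycle is numerically trivial; more usefully, I can modify $V$ by a cycle numerically equivalent to $0$ on $X$ whose pushforward is exactly $\pi_{*}V$ but taken with opposite sign — concretely, lift each relation $\mathrm{div}(f)$ witnessing $\pi_{*}V \sim 0$ to a relation on $X$ (choose a $(k{+}1)$-subvariety of $X$ dominating the given one, pull back $f$ if the map is generically finite on it, or argue via the fiber-dimension bookkeeping) and subtract the resulting principal-type cycle from $V$. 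The corrected cycle $Z'$ then satisfies $\pi_{*}Z' = 0$ on the nose, while $[Z'] = [V] = [Z]$ in $N_{k}(X)$ since we only subtracted something rationally (hence numerically) trivial.

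The main obstacle I anticipate is the very last step: lifting a rational equivalence $\pi_{*}V \sim 0$ on $Y$ to an honest modification of $V$ on $X$ with the bookkeeping on components and dimensions working out — in particular handling subvarieties of $X$ on which $\pi$ is not generically finite, and dealing with the non-properness introduced by shrinking $Y$ to make it smooth (the localization sequence for Chow groups lets one put the difference into cycles over the removed locus, which must then be folded back in). A cleaner route, which I would try first, is to avoid the explicit lift entirely: apply Corollary \ref{cor:makingvertical} is not quite applicable since $\dim Y$ need not equal $k$, but its mechanism (Bloch--Srinivas via the Theorem quoted above) may directly give, from $\pi_{*}V \sim_{\mathbb{Q}} 0$, that a multiple of $V$ is rationally equivalent to a $\pi$-contracted cycle plus a cycle supported over a proper subset — iterating over the stratification of $Y$ and using Noetherian induction to kill the over-a-subset part, producing in the limit an actual cycle with vanishing pushforward in the same numerical class as $Z$.
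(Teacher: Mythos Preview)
Your approach is essentially the paper's: reduce to $\mathbb{Q}$-coefficients, invoke Remark~\ref{domremark} to get a representative whose pushforward lies in $\mathrm{Rat}_k(Y)$, then lift the rational equivalence back to $X$. The paper simply works over $\mathbb{Q}$ throughout rather than $\mathbb{Z}$, which dissolves your worry about the extraneous multiple $N$.

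The ``main obstacle'' you anticipate is not one. First, $Y$ is smooth by hypothesis, so no shrinking is needed and no localization-sequence bookkeeping arises. Second, the lift is entirely elementary: write $\pi_*V = \sum_i \mathrm{div}(r_i)$ with $r_i \in R(V_i)^*$ on $(k{+}1)$-dimensional subvarieties $V_i \subset Y$. Since $\pi$ is surjective, each $V_i$ is dominated by a $(k{+}1)$-dimensional subvariety $W_i \subset X$ (take any component of $\pi^{-1}(V_i)$ and cut by general hyperplanes until the dimension is right); the restriction $\pi|_{W_i}$ is then generically finite of some degree $d_i$, and $A' := \sum_i \tfrac{1}{d_i}\,\mathrm{div}(r_i \circ \pi|_{W_i}) \in \mathrm{Rat}_k(X)$ satisfies $\pi_* A' = \pi_* V$ by \cite[Proposition~1.4(b)]{fulton84}. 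Setting $Z' = V - A'$ finishes. There is no need for the Bloch--Srinivas machinery or Noetherian induction in your proposed alternative route; that would be circular in spirit, since Corollary~\ref{cor:makingvertical} is downstream of this lemma in the paper's logic.
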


\begin{proof}Since $\pi_*$ is defined over $\mathbb Z$, we can replace $\mathbb R$ with $\mathbb Q$ throughout. By Remark \ref{domremark}, there exists a cycle $Z_1\equiv Z$ such that $\pi_*Z_1\in{\rm Rat}_k(Y)$, where ${\rm Rat}_k(Y)$ denotes the group of $k$-$\mathbb Q$-cycles rationally equivalent to zero on $Y$. It is then enough to show that $\pi_*$ maps
${\rm Rat}_k(X)$ onto ${\rm Rat}_k(Y)$. By \cite[\S1.3]{fulton84}, if $A\in{\rm Rat}_k(Y)$, then there exist finitely many $k+1$-dimensional subvarieties $V_i$ of $Y$ and elements $r_i\in R(V_i)^*$ such that $A=\sum_i{\rm div}(r_i)$. 
Let $W_i$ be $k+1$-dimensional subvarieties of $X$ mapping generically finitely onto $V_i$, and denote $d_i=\deg(W_i/V_i)$. Then $A'=\sum_i\frac 1{d_i}{\rm div}(r_i)\in{\rm Rat}_kX$ satisfies $\pi_*A'=A$ by \cite[Proposition 1.4.(b)]{fulton84}.
\end{proof}

\section{The main result}

\subsection*{Proof of Theorem \ref{firstgkthrm}}
It suffices to consider the case when $k \leq \dim Y$.  Let $Z$ be any $k$-cycle such that $\pi_{*}[Z] = 0$.  Apply Lemma \ref{lem:replacingvanishingpushforwardcycles} to replace $Z$ by the numerically equivalent cycle $Z'$ as in the statement.  Thus $Z'$ is a sum of cycles $Z_{i}$ such that each $Z_{i}$ satisfies $\pi_{*}Z_{i} = 0$ and either:
\begin{itemize}
\item $|Z_{i}|$ is $\pi$-contracted, or
\item $\pi(|Z_{i}|)$ is an irreducible $k$-dimensional subvariety of $Y$.
\end{itemize}
Let $p_{i}: W_{i} \to T_{i}$ be the base change of $\pi$ to $\pi(Z_{i})$.  Applying Corollary \ref{cor:makingvertical} to $p_{i}$, we see that each $Z_{i}$ is $\mathbb{Q}$-rationally equivalent to a $\pi$-contracted cycle (as cycles on $W_{i}$, and hence after pushforward as cycles on $X$). Thus $Z$ is numerically equivalent to a sum of $\pi$-contracted cycles.
\qed

\begin{rmk}In a slightly different setting, \cite[Corollary 12.1.5.1]{KM92} shows
that if $\pi:X\to Y$ is a projective morphism with connected fibers between algebraic spaces with rational singularities, and $R^1\pi_*\mathcal O_X=0$, then the GK property holds for $\pi$ when $k=1$.
\end{rmk}

\begin{rmk}The condition $\dim_{\mathbb Q}(A_{0}(F)_{\mathbb Q}) =1$ can be relaxed, asking instead that the closure of the locus in $Y$ where it fails have dimension at most $k-1$.
\end{rmk}

\begin{cor}\label{cor:rccGK}
Suppose that $\pi: X \to Y$ is a morphism of projective varieties over $\mathbb{C}$ such that $Y$ is smooth and the general fiber of $\pi$ is rationally chain connected.  Then $\pi$ satisfies the GK property.
\end{cor}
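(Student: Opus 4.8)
The plan is to reduce to Theorem \ref{firstgkthrm} by controlling the bad locus of $\pi$. The key input is that over $\mathbb{C}$ a rationally chain connected variety $F$ has $\dim_{\mathbb Q}(A_0(F)_{\mathbb Q}) = 1$: indeed, any two points lie on a chain of rational curves, so they are rationally equivalent, and hence $A_0(F)_{\mathbb Q}$ is generated by the class of a single point. Thus the hypothesis ``general fiber rationally chain connected'' gives us exactly the Chow-theoretic hypothesis of Theorem \ref{firstgkthrm}, but only over a \emph{general} point of $Y$, whereas the theorem as stated wants the condition on \emph{every} fiber. So the work is to bridge that gap.

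First I would stratify. Let $Y^\circ \subset Y$ be a dense open subset over which every fiber of $\pi$ is rationally chain connected (such an open set exists by standard properties of rational chain connectedness in families). Write $B = Y \setminus Y^\circ$, a proper closed subset, with irreducible components $B_j$ of dimension $< \dim Y$. Let $X_j = \pi^{-1}(B_j)$ with reduced structure and $\pi_j : X_j \to B_j$ the restriction; note $B_j$ need not be smooth, which is the technical nuisance to handle. The idea is that a class $\alpha \in N_k(X)$ with $\pi_*\alpha = 0$ should be treated separately over $Y^\circ$ and over each $B_j$, inducting on $\dim Y$: over $Y^\circ$ the fibers satisfy the Chow hypothesis, and over each $B_j$ we have a morphism with strictly smaller–dimensional base to which (after resolving $B_j$) we may apply the inductive hypothesis — since the general fiber of $\pi_j$, or of $\pi$ restricted to any subvariety dominating $B_j$, is again rationally chain connected if it has positive-dimensional general fiber, and is itself $\pi$-contracted otherwise.

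Concretely, the cleaner route is to observe that the second Remark after the proof of Theorem \ref{firstgkthrm} already does most of this work: it asserts that the condition $\dim_{\mathbb Q}(A_0(F)_{\mathbb Q}) = 1$ needs to hold only off a closed subset of $Y$ of dimension at most $k-1$. Granting that relaxation, the corollary is nearly immediate: if $k \le \dim Y$, then after discarding the locus $B$ (of dimension $\le \dim Y - 1$) and its further strata, a dimension count together with induction on $\dim Y$ reduces every bad fiber contribution to a $\pi$-contracted cycle or to a lower-dimensional base. The main obstacle is therefore bookkeeping rather than a new idea: one must (i) pass to resolutions $\widetilde{B_j} \to B_j$ so that Corollary \ref{cor:makingvertical} and Lemma \ref{lem:replacingvanishingpushforwardcycles} apply with a smooth base, (ii) check that rational chain connectedness of the general fiber is inherited by the relevant subvarieties after base change, and (iii) reassemble the $\pi$-contracted replacements on $X$ via pushforward, exactly as in the proof of Theorem \ref{firstgkthrm}. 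I expect step (ii), verifying that the geometric hypothesis genuinely propagates down the stratification, to be the part requiring the most care, though it follows from the fact that a dominant image of a rationally chain connected variety is rationally chain connected and that a general fiber of a composition behaves well.
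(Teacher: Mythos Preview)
Your approach has a genuine gap at step (ii), and it is not just bookkeeping. You define $B = Y \setminus Y^\circ$ to be the locus where the fiber fails to be rationally chain connected, and then you want to induct by applying the corollary to $\pi_j : X_j \to B_j$. But the inductive hypothesis requires the \emph{general} fiber of $\pi_j$ to be RCC, and the general fiber of $\pi_j$ is $\pi^{-1}(b)$ for $b$ a general point of $B_j \subset B$ --- precisely a point where RCC fails by the very definition of $B$. Your proposed justification (``dominant images of RCC varieties are RCC, general fibers of compositions behave well'') does not touch this: $X_b$ for $b \in B$ is not exhibited as a dominant image of anything RCC, and there is no composition in sight that would help. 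So the induction does not get off the ground.

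The paper avoids this entirely by a different reduction. Rather than stratifying the bad locus, it uses Proposition \ref{swconjforbirationalsmooth} together with Lemmas \ref{GKcompose} and \ref{GKdescend} to replace $\pi$ by any birationally equivalent morphism without loss of generality. It then \emph{flattens} $\pi$: resolve the rational map $Y \dashrightarrow \mathrm{Hilb}(X)$ to get a smooth $Y'$, and replace $X$ by the main component of $X \times_Y Y'$. Now $\pi$ is flat with $Y$ smooth and the general fiber unchanged. The key new geometric input is that rational chain connectedness is a \emph{flat} deformation invariant: if the general fiber of a flat projective family is RCC, so is every special fiber (the paper sketches this via a limiting argument for chains, using \cite[II.2.4]{kollar96}). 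After flattening, therefore, \emph{every} fiber is RCC, hence has $\dim_{\mathbb Q} A_0(F)_{\mathbb Q} = 1$, and Theorem \ref{firstgkthrm} applies directly with no stratification or induction needed. This closedness-under-flat-specialization is exactly the missing idea in your proposal.
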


\begin{proof}
By Proposition \ref{swconjforbirationalsmooth} and Lemmas \ref{GKcompose} and \ref{GKdescend}, we may replace $\pi$ by a birationally equivalent morphism.  Since $\pi$ is flat over an open subset, there is a rational map $Y \dashrightarrow \mathrm{Hilb}(X)$ induced by the fibers.  Since we may replace $Y$ by a birational model $Y'$ resolving this map (cf. Proposition \ref{swconjforbirationalsmooth}) and may replace $X$ by the main component of $X \times_{Y} Y'$ for $X$, we may assume that $\pi$ is flat (and $Y$ is still smooth).  Note that the general fiber is unchanged by the flattening operation.  By Theorem \ref{firstgkthrm} it suffices to show that rational chain connectedness is a flat deformation invariant.  It suffices to consider the case when we have a flat morphism $\pi: X \to C$ for a curve $C$ where the general fiber is RCC.  Let $F_{0}$ denote the special fiber; we must show any two points $x,x'$ of $F_{0}$ are connected by a chain of rational curves.  Choose a curve $C'$ on $X$ that dominates $C$ and contains $x$ and $x'$.  For a general fiber $F$, there is a rational chain in $F$ connecting all the points of $C' \cap F$.  Using the uncountability of $\mathbb{C}$, one sees that such rational chains form a family of curves which dominates the base.  By taking a closure and applying \cite[II.2.4 Corollary]{kollar96}, we obtain a chain of rational curves in $F_{0}$ which connects $x$ and $x'$.
\end{proof}

\begin{exmple} \label{surfaceproductexample}
Let $S$ be a smooth surface such that $A_{0}(S) = \mathbb{Z}$.  By the work of \cite{mumford68} and \cite{roitman72}, this implies that $p_{g} = 0$ and $\mathrm{Alb}(S)$ is trivial.  Examples include any rational surface $S$ and conjecturally any surface with $q=p_{g}=0$.

Suppose that $Y$ is another smooth surface.  We claim that there is an isomorphism
\begin{equation*}
N_{2}(S \times Y) \cong \mathbb{R} \oplus (N_{1}(S) \otimes N_{1}(Y)) \oplus \mathbb{R}.
\end{equation*}
For surfaces over $\mathbb{C}$, this follows easily from Hodge theory and the Kunneth formula.   (In fact, this argument also works for any surface $S$ satisfying $q = p_{g} = 0$.)  We present an alternative ``algebraic'' approach valid over any uncountable algebraically closed field.

Let $\pi: S \times Y \to Y$ be the projection map.  By Theorem \ref{firstgkthrm} the kernel of $\pi_{*}: N_{2}(S \times Y) \to \mathbb{R}$ is spanned by $\pi$-contracted irreducible surfaces.  First note that any irreducible surface contracted to a point by $\pi$ is a fiber.

Second, suppose that the image of a $\pi$-contracted surface $T$ is a curve $C$ on $Y$.  Let $C'$ be a normalization of $C$.  We show that $N_{2}(S \times C') \cong N_{1}(S) \times \mathbb{R}$.  If an effective surface $T'$ on $S \times C'$ does not dominate $S$, then it is the pullback of a divisor on $S$.  If it does dominate $S$, then it induces a morphism $S \to \mathrm{Jac}(C')$.  But by assumption on the Albanese map this morphism is trivial.  So after twisting by the pullback of a line bundle from $S$, the divisor $T'$ is the pullback of a divisor on $C'$.

Putting these together, we see $N_{2}$ is spanned by three kinds of classes: fibers of $\pi_{1}$, fibers of $\pi_{2}$, and products $\pi_{1}^{*}D \cdot \pi_{2}^{*}E$ for a divisor $D$ on $S$ and $E$ on $Y$. The natural map $\pi_{1}^{*}N_{1}(S) \otimes \pi_{2}^{*}N_{1}(Y) \to N_{2}(S \times Y)$ is easily seen to be injective.\qed
\end{exmple}

\nocite{*}
\bibliographystyle{amsalpha}
\bibliography{gkproperty}

\providecommand{\bysame}{\leavevmode\hbox to3em{\hrulefill}\thinspace}
\providecommand{\MR}{\relax\ifhmode\unskip\space\fi MR }
\providecommand{\MRhref}[2]{%
  \href{http://www.ams.org/mathscinet-getitem?mr=#1}{#2}
}
\providecommand{\href}[2]{#2}
\begin{thebibliography}{Mum68}

\bibitem[BS83]{bs83}
Spencer Bloch and Vasudevan Srinivas, \emph{Remarks on correspondences and
  algebraic cycles}, Amer. J. Math. \textbf{105} (1983), no.~5, 1235--1253.

\bibitem[dJ96]{dejong96}
Aise~Johan de~Jong, \emph{Smoothness, semi-stability and alterations}, Inst.
  Hautes \'Etudes Sci. Publ. Math. (1996), no.~83, 51--93.

\bibitem[DJV13]{djv13}
Olivier Debarre, Zhi Jiang, and Claire Voisin, \emph{Pseudo-effective classes
  and pushforwards}, Pure Appl. Math. Q. \textbf{9} (2013), no.~4, 643--664.

\bibitem[FL14]{fl14}
Mihai Fulger and Brian Lehmann, \emph{Positive cones of dual cycle classes},
  2014, arXiv:1408.5154.

\bibitem[Ful84]{fulton84}
William Fulton, \emph{Intersection theory}, Ergebnisse der Mathematik und ihrer
  Grenzgebiete (3) [Results in Mathematics and Related Areas (3)], vol.~2,
  Springer-Verlag, Berlin, 1984.

\bibitem[KM92]{KM92}
J\'{a}nos Koll\'{a}r and Shigefumi Mori, \emph{Classification of
  three-dimensional flips}, J. Amer. Math. Soc. \textbf{5} (1992), no.~3,
  533--703.

\bibitem[Kol96]{kollar96}
J{\'a}nos Koll{\'a}r, \emph{Rational curves on algebraic varieties}, Ergebnisse
  der Mathematik und ihrer Grenzgebiete. 3. Folge. A Series of Modern Surveys
  in Mathematics [Results in Mathematics and Related Areas. 3rd Series. A
  Series of Modern Surveys in Mathematics], vol.~32, Springer-Verlag, Berlin,
  1996.

\bibitem[Mum68]{mumford68}
David Mumford, \emph{Rational equivalence of {$0$}-cycles on surfaces}, J.
  Math. Kyoto Univ. \textbf{9} (1968), 195--204.

\bibitem[Ro{\u\i}72]{roitman72}
A.~A. Ro{\u\i}tman, \emph{Rational equivalence of zero-dimensional cycles},
  Mat. Sb. (N.S.) \textbf{89(131)} (1972), 569--585, 671.

\bibitem[Voi07]{voisin07}
Claire Voisin, \emph{Hodge theory and complex algebraic geometry. {II}},
  Cambridge Studies in Advanced Mathematics, vol.~77, Cambridge University
  Press, Cambridge, 2007.

\end{thebibliography}

\end{document}